\newtheorem{lemma}{Lemma} [section]
\newtheorem{thm}[lemma]{Theorem}
\newtheorem{conjecture}[lemma]{Conjecture}
\newtheorem{prop}[lemma]{Proposition}
\newtheorem{claim}[lemma]{Claim}
\newtheorem{defi}[lemma]{Definition}
\theoremstyle{remark}
\newtheorem*{notation}{Notation}
\numberwithin{equation}{section}
\DeclareMathOperator{\tor}{{tor}}
\newcommand{\N}{{\mathbb N}}
\newcommand{\Q}{{\mathbb Q}}
\newcommand{\Qbar}{{\overline\Q}}
\newcommand{\C}{{\mathbb C}}
\newcommand{\Z}{{\mathbb Z}}
\newcommand{\bZ}{{\mathbb Z}}
\newcommand{\OO}{{\mathcal O}}
\newcommand{\tth}{^{\operatorname{th}}}
\newcommand{\lra}{\longrightarrow}
\newcommand{\A}{{\mathbb A}}
\DeclareMathOperator{\CL}{\sf{CL}}
\DeclareMathOperator{\CM}{\sf{C}}
\DeclareMathOperator{\Lab}{\sf{L}}
\newcommand{\bC}{{\mathbb C}}
\newcommand{\bA}{{\mathbb A}}
\newcommand{\cO}{\mathcal{O}}
\newcommand{\cF}{\mathcal{F}}
\begin{document}



\title[Endomorphisms of semiabelian varieties]{Mordell-Lang and Skolem-Mahler-Lech theorems for endomorphisms of semiabelian varieties}

\author{Dragos Ghioca}

\address{
Dragos Ghioca \\
Department of Mathematics \& Computer Science\\
University of Lethbridge \\
4401 University Drive \\ 
Lethbridge, Alberta T1K 3M4 
}

\email{ghioca@cs.uleth.ca}

\author{Thomas J. Tucker}

\address{
Thomas Tucker\\
Department of Mathematics\\
Hylan Building\\
University of Rochester\\
Rochester, NY 14627
}

\email{ttucker@math.rochester.edu}

\begin{abstract}
  Using the Skolem-Mahler-Lech theorem, we prove a dynamical
  Mordell-Lang conjecture for semiabelian varieties.
\end{abstract}

\date{\today} \subjclass{Primary 14G25; Secondary 37F10, 11C08}
\maketitle


\section{Introduction}

In 1991, Faltings \cite{Faltings} proved the Mordell-Lang conjecture.

\begin{thm}[Faltings]
\label{T:F}
Let $G$ be an abelian variety defined over the field of complex
numbers $\mathbb{C}$. Let $X\subset G$ be a subvariety and
$\Gamma\subset G(\mathbb{C})$ a finitely generated subgroup of
$G(\C)$. Then $X(\mathbb{C})\cap\Gamma$ is a finite union of cosets
of subgroups of $\Gamma$.
\end{thm}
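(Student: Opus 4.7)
The plan is to follow the Faltings--Vojta strategy: reduce the statement to a finiteness assertion in the case where $X$ contains no translate of a positive-dimensional abelian subvariety, descend from $\C$ to a number field by a specialization argument, and then apply a Vojta-type diophantine inequality.

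First I would carry out the standard stabilizer reduction. Let $H$ be the identity component of $\{g \in G : g+X = X\}$, an abelian subvariety of $G$, and pass to the quotient $\pi\colon G \to G/H$. The image $\bar{X} := \pi(X)$ has trivial connected stabilizer in $G/H$, and it suffices to show $\bar{X}(\C) \cap \pi(\Gamma)$ is finite, since the preimage in $X$ decomposes $X(\C) \cap \Gamma$ as a finite union of cosets of $\Gamma \cap H$. Next, because $\Gamma$ is finitely generated and $X$ is cut out by finitely many equations, the whole configuration is defined over a finitely generated subfield of $\C$; spreading out over a model of finite type over $\Z$ and applying a Hilbert-irreducibility-type specialization that preserves both the injectivity of $\Gamma$ and the irreducibility/stabilizer-triviality of $X$ reduces us to the case where $G$, $X$, and $\Gamma$ all live over a number field $K$.

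For the core finiteness statement I would replace $\Gamma$ by its division hull $\Gamma' = \{P \in G(\Kbar) : nP \in \Gamma \text{ for some } n\geq 1\}$, which is still of finite rank. The key input is Vojta's inequality on the self-product $X \times X \subset G \times G$: using an appropriate big line bundle and arithmetic intersection theory, one obtains a lower bound on a suitable height of a pair $(P,Q) \in (X \cap \Gamma') \times (X \cap \Gamma')$ in terms of $\hh(P)$, $\hh(Q)$, and the N\'eron--Tate ``angle'' between them. Combined with the Mumford gap principle and the stabilizer-triviality of $X$, this forces any point of $X \cap \Gamma'$ of sufficiently large height to lie in a proper closed subvariety; induction on $\dim X$ handles that subvariety, and a Northcott-type argument inside the finite-rank group $\Gamma'$ disposes of the bounded-height points.

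The main obstacle, by an enormous margin, is the Vojta-type height inequality itself: this is the deep arithmetic-geometric input of the Faltings--Vojta--Hindry machinery, requiring Arakelov intersection theory on $G \times G$ together with the product argument that converts a weak height bound into genuine finiteness. All the other ingredients (stabilizer reduction, specialization, Northcott) are essentially formal book-keeping. In the context of the present paper the authors, like anyone invoking Theorem \ref{T:F}, would quote it as a black box rather than reprove it, and build their dynamical Mordell--Lang statement on top of it via the Skolem--Mahler--Lech method.
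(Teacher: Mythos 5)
The paper does not prove Theorem~\ref{T:F}: it is quoted verbatim from Faltings~\cite{Faltings} as a black-box input, exactly as you observe in your final paragraph, so there is no in-paper proof to compare against. Your outline of the Faltings--Vojta machinery (stabilizer reduction to trivial connected stabilizer, specialization from $\C$ to a number field, Vojta's height inequality on $X\times X$ combined with the Mumford gap principle and downward induction on $\dim X$, with Northcott handling bounded-height points) is a correct high-level account of the actual argument, and you have rightly singled out the Vojta inequality as the genuinely deep ingredient. One minor remark: passing to the division hull $\Gamma'$ is what McQuillan's finite-rank extension requires, but is optional for the finitely generated $\Gamma$ of the statement as given; and in the specialization step one should be a little careful to preserve not just injectivity of $\Gamma$ but the full non-degeneracy needed so that the geometric conclusion pulls back, though this is standard bookkeeping.
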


Theorem~\ref{T:F} has been generalized to semiabelian varieties $G$ by
Vojta (see \cite{V1}) and to finite rank subgroups $\Gamma$ of $G$ by
McQuillan (see \cite{McQ}). Recall that a semiabelian variety (over
$\mathbb{C}$) is an extension of an abelian variety by a torus
$(\mathbb{G}_m)^k$.

Vojta's result implies that if $X$ is a subvariety of a semiabelian
variety $G$ defined over $\C$ and $X$ contains no translate of a
positive dimensional algebraic subgroup of $G$, then for any positive
integer $n$, the intersection of $X$ with the orbit of a point $P \in
G(\C)$ under the multiplication-by-$n$-map must be finite. In this
paper we describe the intersection of a subvariety of a semiabelian
variety $G$ defined over $\C$ with the orbit of a point $P \in G(\C)$
under any endomorphism $\phi:G\lra G$.

\begin{thm}
\label{semiabelian}
Let $G$ be a semiabelian variety defined over $\C$, and let $V\subset
G$ be a subvariety defined over $\C$. Let $\phi \in\ End(G)$, let
$P \in G(\C)$, and let $\OO:=\OO_{\phi}(P)$ be the orbit of $P$
under $\phi$. Then $V(\C)\cap\OO$ is either empty or a finite union of
orbits of the form $\OO_{\phi^N}(\phi^{\ell}(P))$, where
$N,\ell\in\N$.
\end{thm}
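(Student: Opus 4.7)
The strategy is to analyze the set $S=\{n\in\N:\phi^n(P)\in V\}$ and show it is a finite union of arithmetic progressions together with a finite set; this translates directly into the claimed description of $V(\C)\cap\OO$. The main tool is the $\mathfrak{p}$-adic analytic form of Skolem--Mahler--Lech (Strassmann's theorem). First, by a standard spreading-out and specialization argument, we reduce to the case where $G$, $V$, $\phi$, and $P$ are defined over a number field $K$, and we then pick a prime $\mathfrak{p}$ of $K$ at which $G$ has good reduction, $\phi$ extends integrally, and $V$, $P$ reduce well.

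Since $G(\mathbb{F}_q)$ is finite, the orbit of $\bar P$ under $\bar\phi$ is eventually periodic. Discarding the finite pre-periodic segment and splitting into residue classes modulo the period, we may replace $\phi$ by a power $\phi^M$ and $P$ by $\phi^a(P)$ for $a=0,\dots,M-1$, so that $\bar\phi(\bar P)=\bar P$; equivalently $\phi^n(P)\equiv P\pmod{\mathfrak{p}}$ for every $n\geq 0$. Absorbing a further power of $\phi$ (using the finiteness of $\mathrm{GL}_d(\mathbb{F}_q)$, where $d=\dim G$), we arrange that $B:=d\phi_e\equiv I\pmod{\mathfrak{p}}$. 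Then $\phi^n(P)-P$ lies in the kernel of reduction $\hat G(\fo_\mathfrak{p})$, which is a $\mathfrak{p}$-adic analytic formal Lie group; under its formal logarithm $L$, the restriction of $\phi$ acts as multiplication by $B$. Writing $U=\phi(P)-P$ and $Y_n=\phi^n(P)-P$, the recursion $Y_{n+1}=U+\phi(Y_n)$ (which uses that $\phi$ is a group endomorphism) yields
\[
L\bigl(\phi^n(P)-P\bigr)=\sum_{k=0}^{n-1}B^k L(U).
\]
Because $B\equiv I\pmod{\mathfrak{p}}$, the powers $B^k=\exp(k\log B)$ interpolate as a $\mathfrak{p}$-adic analytic function of $k\in\Z_p$, and hence so does the partial sum. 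Exponentiating via $L^{-1}$ and translating by $P$ produces a $\mathfrak{p}$-adic analytic map $f:\Z_p\to G(K_\mathfrak{p})$ with $f(n)=\phi^n(P)$ for every $n\in\N$.

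Composing $f$ with local defining equations of $V$ at $P$ yields finitely many $\mathfrak{p}$-adic analytic functions on $\Z_p$; by Strassmann's theorem, each is either identically zero or has finitely many zeros, so their common zero set in $\Z_p$ is either all of $\Z_p$ or finite. Since $\N$ is dense in $\Z_p$, intersecting with $\N$ gives either the entire arithmetic progression (which then lies in $S$) or a finite subset of it. Summing over the residue classes $a$ and adding back the pre-periodic segment produces the required description of $S$, hence of $V(\C)\cap\OO$. The main technical obstacle in executing this plan is arranging the $\mathfrak{p}$-adic interpolation uniformly: one must choose $\mathfrak{p}$ and a power of $\phi$ so that simultaneously the orbit lies in a single reduction class and the differential of $\phi$ at the identity is $\mathfrak{p}$-adically close to $I$, guaranteeing convergence of the relevant logarithm/exponential series. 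For semiabelian $G$, the formal logarithm on the kernel of reduction exists exactly as in the abelian case, so no new subtlety arises from the toric factor.
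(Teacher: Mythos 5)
Your approach is genuinely different from the paper's: the paper reduces to Vojta's Mordell--Lang theorem for semiabelian varieties (applied to the finitely generated $\Z$-module $\Gamma = \Z[\phi]\cdot P$, using that $\Z[\phi]$ is finite over $\Z$), expresses $\phi^k(P) = \sum_j z_{k,j}\phi^j(P)$ via integer linear recurrence sequences, decomposes $\Gamma$ into torsion and free parts, translates membership in a coset $b+H$ into congruences and linear equations in the $z_{k,j}$, and finally invokes the classical Skolem--Mahler--Lech theorem (Proposition~\ref{ever 3}). You instead propose the $p$-adic interpolation route via the formal logarithm on the kernel of reduction, together with Strassmann's theorem. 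The paper's Introduction explicitly anticipates this alternative (``It may also be possible to give a purely $p$-adic analytic proof of Theorem~\ref{semiabelian}, using logarithms, following the example of \cite{compositio, p-adic}''), so the idea is sound in outline. What each buys: the paper's argument is coordinate-free and imports the heavy arithmetic input (Vojta) as a black box, reducing the dynamical question to elementary recurrence arithmetic; your argument bypasses Mordell--Lang entirely and is closer in spirit to the original Skolem--Mahler--Lech proof, at the price of having to set up the reduction-mod-$\mathfrak{p}$ and formal-group machinery carefully.

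There is, however, a genuine gap in your first step: you cannot in general ``reduce to the case where $G$, $V$, $\phi$, and $P$ are defined over a number field $K$.'' The data are given over $\C$ and need not descend to $\Qbar$ (e.g.\ $G$ could contain an elliptic factor with transcendental $j$-invariant), and a naive specialization to a number field can only enlarge the return set $\{n : \phi^n(P)\in V\}$, so it does not preserve the conclusion you are trying to establish. The correct and standard fix (used by Lech and by Bell \cite{Bell_S-M-L}) is to take a finitely generated $\Z$-subalgebra $R\subset\C$ over which everything spreads out, and then use the Lech/Cassels embedding theorem to embed $R$ into $\Z_p$ for a suitable prime $p$ so that $G$ extends to a semiabelian scheme over $\Z_p$, $\phi$ extends integrally, and the orbit and $V$ behave well. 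With that replacement your subsequent steps (passing to a power of $\phi$ so that $\bar\phi$ fixes $\bar P$ and $d\phi_e\equiv I$ modulo a high enough power of $p$, interpolating $n\mapsto\phi^n(P)$ analytically on each residue class, and applying Strassmann to the pullbacks of local equations of $V$) are correct and complete the proof.
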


Prior to Vojta's proof of the semiabelian case of the Mordell-Lang
conjecture, Laurent \cite{Laurent} proved the Mordell-Lang conjecture
for any power of the multiplicative group. In particular. Laurent's
result shows that if $V\subset\mathbb{G}_m^k$ contains no translate of
a positive dimensional torus, then $V$ contains finitely many points
of the orbit of any point of $\bA^k$ under the map
$(X_1,\dots,X_k)\mapsto (X_1^{e_1},\dots,X_k^{e_k})$ (with
$e_i\in\mathbb{N}$) on $\bA^k$. This led the authors conjecture in
\cite{p-adic} that a similar result holds for any polynomial action on
the coordinates of the affine plane.  This conjecture is the
following.

\begin{conjecture}
\label{dynamical M-L}
Let $F_1,\dots,F_g$ be polynomials in $\bC[X]$, let $\cF$ be their
action coordinatewise on $\bA^g$, let $\cO_{\cF}((a_1,\dots,a_g))$
denote the $\cF$-orbit of the point $(a_1,\dots,a_g)\in \bA^g(\bC)$,
and let $V$ be a subvariety of $\bA^g$.  Then $V$ intersects
$\cO_{\cF}((a_1,\dots,a_g))$ in at most a finite union of orbits of
the form $\cO_{\cF^N}(\cF^{\ell}(a_1,\dots,a_g))$, for some
non-negative integers $N$ and $\ell$.
\end{conjecture}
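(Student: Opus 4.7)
The plan is to imitate the classical Skolem--Mahler--Lech proof in the dynamical setting. First, I would reduce to the hypersurface case: since $V$ is cut out by finitely many polynomials $H_1,\dots,H_r$, the set
\[
S := \{\, n \in \N \,:\, \cF^n(a_1,\dots,a_g) \in V \,\}
\]
is the intersection of the sets $S_i := \{\, n \in \N \,:\, H_i(\cF^n(a_1,\dots,a_g)) = 0 \,\}$. It therefore suffices to show each $S_i$ is a finite union of (possibly singleton) arithmetic progressions; intersecting finitely many such sets preserves this property, and a decomposition of $S$ into arithmetic progressions is exactly the claimed decomposition of $V \cap \cO_{\cF}((a_1,\dots,a_g))$ into orbits of the form $\cO_{\cF^N}(\cF^{\ell}(a_1,\dots,a_g))$.

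Next I would descend the coefficients. Let $R$ be the finitely generated subring of $\bC$ containing the coefficients of the $F_i$, the coefficients of a fixed $H = H_i$, and the coordinates $a_j$. A standard specialization argument embeds $R$ into $\bQb_p$ for a suitable prime $p$ at which each $F_i$ has good reduction and for which $(a_1,\dots,a_g)$ lies in $\cO^g$, where $\cO$ denotes the ring of integers of the completion.

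The heart of the proof is then $p$-adic. Modulo the maximal ideal of $\cO$, each coordinate orbit $\{F_i^n(a_i) \bmod p\}_{n \ge 0}$ lies in a finite set and is therefore eventually periodic. After replacing $\cF$ by an iterate $\cF^k$ and shifting by some $\ell$, I may assume each coordinate of $\cF^{\ell}(a_1,\dots,a_g)$ sits in a residue disk fixed by the corresponding $F_i^k$. Expanding $F_i^k$ around its residue fixed point and (in the generic case) using that the multiplier there is a $p$-adic unit, one can $p$-adically linearize $F_i^k$ on that disk, producing a $p$-adic analytic function $\Psi : \Z_p \to \cO^g$ that interpolates $n \mapsto \cF^{kn}(\cF^{\ell}(a_1,\dots,a_g))$. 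Then $H \circ \Psi$ is $p$-adic analytic on $\Z_p$, so by Strassmann's theorem it either vanishes identically --- contributing the entire arithmetic progression $\ell + k\N$ to $S_i$ --- or it has only finitely many zeros in $\Z_p$. Ranging over the finitely many residues $\ell \bmod k$ concludes.

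The main obstacle is the linearization step: when the multiplier $(F_i^k)'$ at a residue fixed point fails to be a $p$-adic unit (for example, in the super-attracting case, which does occur for general polynomials), the naive $p$-adic change of variables breaks down. I expect this can sometimes be handled by a finer Newton-polygon analysis yielding a slower-converging but still analytic interpolation, or by a judicious choice of $p$ avoiding the obstructing multipliers, but producing such a prime uniformly in the data $(F_1,\dots,F_g, a_1,\dots,a_g, V)$ looks like the key technical difficulty in proving the conjecture in full generality.
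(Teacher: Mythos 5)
The statement you are proving is Conjecture~\ref{dynamical M-L}, which the paper records as an \emph{open conjecture}, not a theorem: the authors state it is known only for $g=2$ with $V$ a line, and (by Bell's work) when the $F_i$ are all linear. There is therefore no proof of it in the paper to compare against. The paper's actual theorem, Theorem~\ref{semiabelian}, concerns an endomorphism $\phi$ of a \emph{semiabelian variety}, not a general coordinatewise polynomial action on $\bA^g$, and its proof is geometric rather than $p$-adic: since $\Z[\phi]$ is a finite extension of $\Z$, the cyclic $\Z[\phi]$-module $\Gamma$ generated by $P$ is a finitely generated abelian group; Vojta's Mordell-Lang theorem describes $V(\C)\cap\Gamma$ as a finite union of cosets $b_i+H_i$; and the integers $k$ with $\phi^k(P)\in b_i+H_i$ are then controlled via linear recurrences for the coordinates of $\phi^k(P)$ in a $\Z$-basis of $\Gamma$ --- reduction modulo $N$ for the torsion part, and the Skolem-Mahler-Lech theorem (Proposition~\ref{ever 3}) for the free part. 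None of this machinery is available for a general polynomial self-map of $\bA^g$, because there is no invariant group structure; this is precisely why Conjecture~\ref{dynamical M-L} does not reduce to Theorem~\ref{semiabelian}.

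Your proposal instead pursues the $p$-adic Skolem-Mahler-Lech route, which is the approach the authors themselves mention in passing and used in their earlier work to prove special cases. Your outline is sound as far as it goes: reducing to hypersurfaces, specializing into a $p$-adic field with finite residue field so each reduced coordinate orbit is eventually periodic, and, when the multiplier of $F_i^k$ at its residue fixed point is a $p$-adic unit congruent to $1$ modulo $p$, interpolating $n\mapsto F_i^{kn}$ by a $\Z_p$-analytic map and applying Strassmann's theorem. You have also correctly located the genuine gap, and it is not a small one: when a coordinate reduces to a ramified or super-attracting periodic cycle, the iterates are \emph{not} $p$-adic analytic in $n$, and no single residue-disk change of variables fixes this. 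Moreover, one cannot always dodge the problem by choosing $p$ well --- for fixed data, ramified reduction can be forced at every prime of good reduction. This is exactly the obstruction that keeps Conjecture~\ref{dynamical M-L} open, so your sketch accurately reproduces the known partial results and their limit, but it is not a proof, as you yourself acknowledge.
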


Conjecture~\ref{dynamical M-L} fits into Zhang's far-reaching system of dynamical
conjectures \cite{ZhangLec}.  Zhang's conjectures include dynamical
analogues of the Manin-Mumford and Bogomolov conjectures for abelian
varieties (now theorems of Raynaud \cite{Raynaud1, Raynaud2}, Ullmo \cite{Ullmo},
and Zhang \cite{Zhang}), as well as a conjecture about the Zariski
density of orbits of points under fairly general maps from a
projective variety to itself.  The latter conjecture is related to our
Conjecture~\ref{dynamical M-L}, though neither conjecture contains the
other. Conjecture~\ref{dynamical M-L} has been proved in the case where $g=2$ and $V$ is a
line in $\bA^2$ (see \cite{Mike}). 

Bell \cite{Bell_S-M-L} proved Conjecture~\ref{dynamical M-L} in the
case the polynomials $F_i$ are all linear. More precisely, Bell proved
that if $\phi:\bA^g\to \bA^g$ is an automorphism, then for every
subvariety $V\subset \bA^g$ and any $P \in \bA^g$, the set of positive
integers $k$ for which $\varphi^k(P) \in V$ is either empty or equal
to a finite union of arithmetic progressions.  Bell's result is an
algebro-geometric generalization of a classical theorem by Skolem
\cite{Skolem} (which was later extended by Mahler \cite{Mahler-2} and
Lech \cite{Lech}).  The Skolem-Mahler-Lech theorem says that if
$\{u_k\}_{k\in\N}\in\C$ is a linear recurrence sequence, then the set
of all $k\in\N$ such that $u_k = 0$ is at most a finite union of
arithmetic progressions (some of them possibly constant). For a
quantitative version of the Skolem-Mahler-Lech theorem, we refer the
reader to \cite{ESS}.

The Skolem-Mahler-Lech theorem (see Proposition~\ref{ever 3}) is also
instrumental in our proof of Theorem~\ref{semiabelian}. For any
endomorphism $\phi$ of a semiabelian variety $G$ defined over $\C$,
the ring $\mathbb{Z}[\phi]$ is a finite extension of $\Z$; thus, a
cyclic $\Z[\phi]$-module is a finitely generated $\Z$-module.
Therefore, for each point $P\in G(\C)$, a subvariety $V$ of $G$
intersects the cyclic $\Z[\phi]$-module $\Gamma$ generated by $P$ in a
finite union of cosets of subgroups of $\Gamma$.  Since
$$V(\C)\cap \cO_{\phi}(P) = \left( V(\C)\cap \Gamma\right) \cap
\cO_{\phi}(P),$$
it suffices to describe the intersection of the
$\phi$-orbit $\cO_{\phi}(P)$ with each coset of a subgroup of
$\Gamma$, which is done using, among other techniques, also the Skolem-Mahler-Lech theorem. Our argument is similar to the one found in
\cite{Ghioca-TAMS} (see also \cite{Moosa-Scanlon} for the description
of the intersection of subvarieties of a semiabelian variety $G$
defined over a finite field $\mathbb{F}_q$ with $\Z[F]$-submodules of
$G$, where $F$ is the Frobenius on $\mathbb{F}_q$).  We note that our
methods are not $p$-adic (as in the case of the classical
Skolem-Mahler-Lech theorem), but rather geometric (as in the case of
the classical Mordell-Lang conjecture).  Thus, they represent a
connection of sorts between the Skolem-Mahler-Lech theorem and
Mordell-Lang conjecture.  It may also be possible to give a purely
$p$-adic analytic proof of Theorem~\ref{semiabelian}, using
logarithms, following the example of \cite{compositio, p-adic})

We briefly sketch the plan of our paper. In Section~\ref{second con}
we define the property for a general morphism from a variety into
itself to satisfy the ``Mordell-Lang condition'' (see
Definition~\ref{M-L condition}), and then we show the connection
between the Mordell-Lang condition and our Theorem~\ref{semiabelian},
and our Conjecture~\ref{dynamical M-L}. In Section~\ref{previous} we
state the Skolem-Mahler-Lech theorem, while in Section~\ref{semi-sub}
we prove Theorem~\ref{semiabelian}.

\begin{notation}
  Throughout this paper, $f^n$ denotes the $n\tth$ iterate of the
  map $f$.  We also use $\alpha^n$ and $X^n$ for the $n\tth$
  power of a constant or of $X$ itself, but this should not cause
  confusion.  We write $\N$ for the set of non-negative integers. An arithmetic progression in $\N$ is a set of the form $\{Nk+\ell\text{ : }k\in\N\}$ for some $N,\ell\in\N$ (if $N=0$, then the set consists of only one element $\{\ell\}$).   We
  write $\overline{K}$ for an algebraic closure of the field $K$.

If $\varphi: V \lra V$ is a map from a variety to itself and
$z$ is a point on $V$, we define the orbit
$\cO_{\varphi}(z)$ of $z$ under $\varphi$ as 
$$ \cO_{\varphi}(z) = \{ \varphi^k(z)\text{ : }k\in\N\}.$$
\end{notation}


\section{An equivalent conjecture}
\label{second con}

In this section, we present a condition that is equivalent to the
conclusions of Theorem~\ref{semiabelian} and of Conjecture~\ref{dynamical M-L} but has a statement that may
seem more familiar.

\begin{defi}
\label{M-L condition}
  Let $V$ be a variety over a field $L$ and let $\varphi: V \lra V$ be
  a morphism.  We say that $\varphi$ satisfies the {\bf
    Mordell-Lang condition} if for any subvariety $W$ of $V$ and any
  point $z$ in $V(L)$, there are $\varphi$-periodic subvarieties $Y_1,
  \dots, Y_m$ of $W$ and points $w_1, \dots, w_n$ in W such that
$$ \cO_{\varphi}(z) \cap W = \left( \bigcup_{i=1}^m (Y_i \cap  \cO_{\varphi}(z)) \right)
\bigcup \{ w_j\text{ : } 1\le j\le n\}.$$
\end{defi}

Faltings' proof of the Mordell-Lang conjecture and Vojta's extension say the following.
\begin{thm}
\label{C:F}
Let $G$ be a semiabelian variety defined over the field of complex
numbers $\mathbb{C}$. Let $X\subset G$ be a subvariety and
$\Gamma\subset G(\mathbb{C})$ a finitely generated subgroup of
$G(\C)$. Then there exist finitely many translates $(y_i+Y_i)\subset X$ of positive dimensional algebraic subgroups $Y_i\subset G$ (for $1\le i\le m$), and there exist finitely many points $x_j\in X$ (for $1\le j\le n$), such that
$$X(\mathbb{C})\cap\Gamma = \left( \bigcup_{i=1}^m (y_i+Y_i)\cap \Gamma\right) \bigcup \{x_j\text{ : }1\le j\le n\}.$$
\end{thm}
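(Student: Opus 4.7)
The plan is to derive Theorem~\ref{C:F} as a geometric refinement of Theorem~\ref{T:F} together with its Vojta extension to the semiabelian setting, upgrading the description ``finite union of cosets of subgroups of $\Gamma$'' to a description in terms of translates of algebraic subgroups of $G$ that are actually contained in $X$, plus a finite list of exceptional points.

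First I would apply Theorem~\ref{T:F} (in Vojta's semiabelian formulation) to write
$$X(\mathbb{C})\cap \Gamma \;=\; \bigcup_{i=1}^r (\delta_i + H_i),$$
where each $H_i$ is a subgroup of $\Gamma$ and each $\delta_i \in \Gamma$. For each index $i$, let $Z_i$ be the Zariski closure of $H_i$ in $G$. Because $\Gamma$ is finitely generated and we are working in characteristic zero, $Z_i$ is an algebraic subgroup of $G$: its identity component is a connected algebraic subgroup (the Zariski closure of a subgroup is a subgroup), and the finite generation of $H_i$ controls the number of connected components. Since $\delta_i + H_i$ is Zariski dense in the translate $\delta_i + Z_i$ and $X$ is closed in $G$, we deduce that $\delta_i + Z_i \subset X$.

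Next I would sort the indices by the dimension of $Z_i$. Those $i$ with $\dim Z_i > 0$ provide translates $\delta_i + Z_i \subset X$ of positive dimensional algebraic subgroups; after re-indexing they yield the list $y_1+Y_1,\dots,y_m+Y_m$ in the statement, and by construction $\delta_i + H_i \subset (\delta_i + Z_i)\cap \Gamma$, so the contribution of each such coset is absorbed into the corresponding piece $(y_k+Y_k)\cap \Gamma$. Those $i$ with $\dim Z_i = 0$ correspond to finite subgroups $H_i$, hence to cosets $\delta_i + H_i$ that are already finite sets of points; these finitely many points are collected into the list $\{x_j\}$. Conversely, each $(y_k+Y_k)\cap \Gamma$ lies in $X(\mathbb{C})\cap \Gamma$ since $y_k+Y_k \subset X$, and each $x_j$ was picked from $X(\mathbb{C})\cap \Gamma$, so equality holds.

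The only substantive input in this argument is the Faltings--Vojta theorem, which we invoke as a black box; the remainder is bookkeeping resting on the standard characteristic zero fact that the Zariski closure of a finitely generated subgroup of a semiabelian variety is an algebraic subgroup. Accordingly, the ``main obstacle'' is not internal to this proof but lies in Theorem~\ref{T:F} and its semiabelian extension by Vojta, on which everything here ultimately depends.
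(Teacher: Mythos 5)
Your derivation is correct, and it is worth noting that the paper itself does not give a proof of Theorem~\ref{C:F} at all: it simply asserts the statement as what ``Faltings' proof of the Mordell-Lang conjecture and Vojta's extension say,'' citing \cite{Faltings} and \cite{V1} directly. Indeed, the formulation in Theorem~\ref{C:F} is essentially the form in which Vojta states the semiabelian Mordell-Lang theorem, so no separate argument is needed in the paper. Your proposal fills in the (standard, but not completely trivial) bookkeeping that upgrades the coset-union form of Theorem~\ref{T:F} to the geometric form of Theorem~\ref{C:F}, and the key step is exactly right: the Zariski closure $Z_i$ of the subgroup $H_i\subset\Gamma$ is an algebraic subgroup of $G$, and since $\delta_i+H_i$ is dense in $\delta_i+Z_i$ while $X$ is closed, the whole translate $\delta_i+Z_i$ lies in $X$. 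Splitting according to $\dim Z_i>0$ versus $\dim Z_i=0$ then yields precisely the translates $y_i+Y_i\subset X$ and the finite exceptional set $\{x_j\}$, with both inclusions in the asserted equality following as you observe. One small remark: the parenthetical appeal to characteristic zero and to finite generation of $H_i$ is not actually needed for the Zariski-closure-of-a-subgroup fact (which holds in any characteristic, and an algebraic group has finitely many components regardless), but this costs nothing. In short, the paper cites; you derive; the derivation is sound.
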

We also note that according to \cite[Lemme $10$]{Hindry-subvarieties}, if an irreducible subvariety $X$ of a semiabelian variety $G$ is periodic under the multiplication-by-$\ell$-map (for $\ell>1$), then $X$ is a translate of an algebraic subgroup of $G$.

In this paper we show that any endomorphism of a semiabelian variety over $\C$ satisfies the Mordell-Lang condition. We note that Bell \cite{Bell_S-M-L} proved that any automorphism of an affine variety also satisfies the Mordell-Lang condition. First we prove an equivalent formulation of the Mordell-Lang condition.

\begin{prop}
  A morphism $\varphi: V \lra V$ satisfies the Mordell-Lang
  condition if and only if for any subvariety $W$ of $V$ and any point
  $z \in V$, the intersection of $W$ with $\cO_{\varphi}(z)$ is equal
  to a finite union of orbits of the form
  $\cO_{\varphi^N}(\varphi^{\ell}(z))$, for some non-negative
  integers $N$ and $\ell$.
\end{prop}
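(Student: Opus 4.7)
The plan is to unwind both directions of the equivalence directly from the definitions; nothing deeper is required.

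For the forward implication $(\Rightarrow)$, I would start from a Mordell-Lang decomposition
\[
\cO_\varphi(z) \cap W \;=\; \bigcup_{i=1}^{m} (Y_i \cap \cO_\varphi(z)) \;\cup\; \{w_1, \dots, w_n\},
\]
with each $Y_i$ a $\varphi$-periodic subvariety of $W$. First I would choose a common period $N \geq 1$ (the least common multiple of the individual periods), so that $\varphi^N(Y_i) \subseteq Y_i$ for every $i$. Then I would split
\[
\cO_\varphi(z) \;=\; \bigcup_{k=0}^{N-1} \cO_{\varphi^N}(\varphi^k(z))
\]
and analyze each intersection $Y_i \cap \cO_{\varphi^N}(\varphi^k(z))$ separately. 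The key observation is that, since $\varphi^N(Y_i) \subseteq Y_i$, once the $\varphi^N$-orbit of $\varphi^k(z)$ enters $Y_i$ it must remain there; so this intersection is either empty or equals a tail orbit $\cO_{\varphi^N}(\varphi^{Nj_{i,k}+k}(z))$, where $j_{i,k}$ is the least index for which $\varphi^{Nj_{i,k}+k}(z) \in Y_i$. The isolated points $w_j$ lie in $\cO_\varphi(z)$, so each is $\varphi^{\ell_j}(z)$ for some $\ell_j$, and may be written as a degenerate orbit $\cO_{\varphi^0}(\varphi^{\ell_j}(z)) = \{\varphi^{\ell_j}(z)\}$, which is exactly why the target statement permits $N = 0$.

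For the reverse implication $(\Leftarrow)$, I would start from
\[
W \cap \cO_\varphi(z) \;=\; \bigcup_{i=1}^{r} \cO_{\varphi^{N_i}}(\varphi^{\ell_i}(z))
\]
and manufacture the data of Definition~\ref{M-L condition}. The indices $i$ with $N_i = 0$ contribute single points $\varphi^{\ell_i}(z)$, which are collected into the finite set $\{w_1, \dots, w_n\}$. For each index $i$ with $N_i \geq 1$, I would take $Y_i := \overline{\cO_{\varphi^{N_i}}(\varphi^{\ell_i}(z))}^{\mathrm{Zar}}$. Since $W$ is closed and contains the orbit, $Y_i \subseteq W$; and since $\varphi^{N_i}$ carries the $\varphi^{N_i}$-orbit into itself, passing to Zariski closure gives $\varphi^{N_i}(Y_i) \subseteq Y_i$, so $Y_i$ is $\varphi$-periodic. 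The required set equality $W \cap \cO_\varphi(z) = \bigcup_i (Y_i \cap \cO_\varphi(z)) \cup \{w_j\}$ then follows by a routine double inclusion: the $\supseteq$ inclusion uses $Y_i \subseteq W$ and $w_j \in W$; the $\subseteq$ inclusion uses that every element of the right-hand side of the hypothesized orbit decomposition lies either in some $\cO_{\varphi^{N_i}}(\varphi^{\ell_i}(z)) \subseteq Y_i$ or in $\{w_j\}$.

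The argument is essentially formal, so there is no substantial obstacle. Two small points deserve attention: (i) the degenerate case $N = 0$ in the target statement, which exists precisely to encode single-point orbits and lets one absorb the isolated $w_j$'s of the Mordell-Lang condition into the unified orbit formulation; and (ii) fixing the convention for ``$\varphi$-periodic subvariety''. I adopt the natural reading $\varphi^N(Y) \subseteq Y$ for some $N \geq 1$, which is exactly the property that the Zariski closure of a $\varphi^N$-orbit automatically possesses, and which suffices for the ``once in, always in'' step used in the forward direction.
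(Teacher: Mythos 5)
Your proof is correct and follows essentially the same approach as the paper's. The only small difference is in the converse direction, where the paper takes just the union of positive-dimensional components of the Zariski closure of each orbit $\cO_{\varphi^{N_i}}(\varphi^{\ell_i}(z))$ as $Y_i$ and absorbs any zero-dimensional components into the isolated points $w_j$, whereas you take the full Zariski closure as $Y_i$; both are $\varphi^{N_i}$-invariant subvarieties of $W$, so both work under the invariance reading of ``periodic'' that you and the paper both use.
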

\begin{proof}
Note that the proposition is trivial when $z$ is preperiodic under
$\varphi$.  Thus, we assume that $z$ is not preperiodic.  

Suppose that 
$$
\cO_{\varphi}(z) \cap W = \left( \bigcup_{i=1}^m (Y_i \cap
  \cO_{\varphi}(z)) \right) \bigcup  \{ w_j\text{ : }1\le j \le n\}.$$
Then for each $Y_i$, we let $N:=N_i$ be the period of $Y_i$ and for each $r\in\{0,\dots,N-1\}$, we
let $\ell:=\ell_{i,r}$ be the smallest non-negative integer $\ell\equiv r\text{ (mod $N$)}$ such that
$\varphi^{\ell}(z) \in Y_i$.  For each $w_j$, we let $N=0$ and let
$\ell$ be the unique non-negative integer such that $\varphi^\ell(z) =
w_j$.  Then we see that the intersection of $V$ with
$\cO_{\varphi}(z)$ is equal to a finite union of orbits of the form
$\cO_{\varphi^N}(\varphi^{\ell}(z))$, for some non-negative integers
$N$ and $\ell$.

Conversely, suppose that the intersection of $V$ with
$\cO_{\varphi}(z)$ is equal to a finite union of orbits of the form
$\cO_{\varphi^N}(\varphi^{\ell}(z))$, for some non-negative integers
$N$ and $\ell$.  For each orbit where $N=0$, we obtain a single point
$w_j$.  For each orbit where $N \not= 0$, taking the union of the
positive dimensional components of the Zariski closure
of the orbit yields a positive dimensional subvariety $Y_i$
of $W$ that is invariant under $\varphi^N$.  The zero-dimensional
components simply give additional points $w_j$.  
\end{proof}

Thus, Theorem~\ref{semiabelian} says that any endomorphism $\phi$ of a
semiabelian variety satisfies the Mordell-Lang condition. Furthermore,
our Conjecture~\ref{dynamical M-L} can be reformulated as follows.
\begin{conjecture}
\label{dynamical M-L 2}
Let $g\ge 1$, let $F_1,\dots,F_g$ be polynomials in $\bC[X]$, and let $\phi:\A^g\lra \A^g$ be the morphism 
$$\varphi(z_1,\dots,z_g) := (F_1(z_1),\dots,F_g(z_g)).$$
Then $\varphi$ satisfies the Mordell-Lang condition.
\end{conjecture}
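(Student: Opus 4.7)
The plan is to mirror the strategy behind Theorem~\ref{semiabelian} and reduce Conjecture~\ref{dynamical M-L 2} to extracting an arithmetic-progressions structure on the return set through a $\mathfrak{p}$-adic analytic interpolation of the orbit. By the equivalent formulation of the Mordell--Lang condition proved above, it suffices to show that for each $z\in\A^g(\bC)$ and each subvariety $W\subset\A^g$, the set $\cS:=\{k\in\N\,:\,\varphi^k(z)\in W\}$ is a finite union of arithmetic progressions. Indeed, each infinite progression $\{Nk+\ell\,:\,k\in\N\}$ contained in $\cS$ forces the Zariski closure of $\cO_{\varphi^N}(\varphi^\ell(z))$ to be a $\varphi^N$-periodic subvariety of $W$, producing the required $Y_i$, while singleton progressions contribute the isolated points $w_j$.

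To produce this structure on $\cS$, first specialize so that the coefficients of the $F_i$ and the coordinates of $z$ lie in a number ring $R$, and pick a prime $\mathfrak{p}\subset R$ of good reduction, lying above the rational prime $p$. The aim is to find a positive integer $N$ and a $\mathfrak{p}$-adic polydisk $U$ containing $\varphi^\ell(z)$ with $\varphi^N(U)\subseteq U$, on which $\varphi^N$ admits a $\mathfrak{p}$-adic analytic interpolation $\Phi:\Z_p\to U$ satisfying $\Phi(k)=\varphi^{Nk+\ell}(z)$ for every $k\in\N$. Pulling the defining equations of $W$ back through $\Phi$ yields $\mathfrak{p}$-adic analytic functions on $\Z_p$; by Strassman's theorem each such function is either identically zero or has finitely many zeros, giving the desired dichotomy on the residue class $\ell \bmod N$. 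Summing over the $N$ residue classes expresses $\cS$ as a finite union of arithmetic progressions.

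The main obstacle is producing the invariant polydisk together with the genuine $\mathfrak{p}$-adic analytic interpolation; this is precisely where Bell's automorphism argument fails to transfer. For a non-invertible coordinatewise polynomial map the orbit need not enter any region on which the derivative is a $\mathfrak{p}$-adic unit: superattracting fixed points of some $F_i$, or wild ramification of $F_i$ modulo $\mathfrak{p}$, can force the coordinatewise multipliers to vanish $\mathfrak{p}$-adically, so the naive logarithm interpolation breaks down. A plausible workaround is to choose $\mathfrak{p}$ so that each $F_i^N$ admits, in the appropriate residue disk, a periodic point whose multiplier is a $\mathfrak{p}$-adic unit, then to build $\Phi$ coordinate by coordinate via the $\mathfrak{p}$-adic logarithm near that point, reverting to the classical Skolem--Mahler--Lech interpolation when $\deg F_i=1$. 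Synchronizing these one-dimensional constructions into a single $N$ that works across all coordinates, and treating coordinates for which no such unit-multiplier periodic point is available in a residue disk, is the genuinely hard step.
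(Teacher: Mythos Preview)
The statement you are attempting to prove is labeled a \emph{conjecture} in the paper, and the paper gives no proof of it; there is nothing to compare your argument against. The paper only proves the semiabelian case (Theorem~\ref{semiabelian}) and explicitly leaves Conjecture~\ref{dynamical M-L 2} open.

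More importantly, your proposal is not a proof but an outline that openly terminates at an unresolved obstruction. You yourself identify the gap: for a non-invertible coordinatewise polynomial map one cannot in general find, for a given prime $\mathfrak{p}$, a residue disk on which some iterate has a periodic point with unit multiplier in every coordinate simultaneously; superattracting cycles and wild ramification genuinely obstruct the $\mathfrak{p}$-adic logarithm construction. Saying that ``synchronizing these one-dimensional constructions \dots is the genuinely hard step'' is an admission that the argument is incomplete. Strassman's theorem only applies once you have the analytic interpolation $\Phi:\Z_p\to U$, and you have not produced it.

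A separate remark: the opening claim that your plan ``mirrors the strategy behind Theorem~\ref{semiabelian}'' is inaccurate. The paper's proof of Theorem~\ref{semiabelian} does \emph{not} proceed via $\mathfrak{p}$-adic interpolation. It uses two features that are unavailable for general polynomial maps on $\A^g$: first, that $\End(G)$ is integral over $\Z$, so the $\Z[\phi]$-module generated by $P$ is a finitely generated abelian group; second, Vojta's theorem, which reduces $V\cap\Gamma$ to a finite union of cosets. Only after that reduction does the paper invoke the Skolem--Mahler--Lech theorem, and it does so on explicit integer linear recurrences, not via $p$-adic analytic continuation of the orbit. Neither ingredient has an analogue for $(F_1,\dots,F_g)$ acting on $\A^g$, which is precisely why Conjecture~\ref{dynamical M-L 2} remains a conjecture in this paper.
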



\section{The Skolem-Mahler-Lech theorem}
\label{previous}

In this section we state the Skolem-Mahler-Lech theorem which will be
used in our proof of Theorem~\ref{semiabelian}. First we need to
introduce the basic set-up for \emph{linear recurrence sequences} (see
\cite{Evertse} for more details on linear recurrent sequences).
\begin{defi}
\label{recurrence sequences}
The sequence $\{u_k\}_{k\in\N}$ is a (linear) recurrence sequence, if there exists a positive integer $n$, and there exist constants $c_1,\dots,c_{n}$ (with $c_n\ne 0$) such that
\begin{equation}
\label{the relation}
u_{k+n} = \sum_{i=1}^{n} c_i u_{k+n-i}\text{, for each $k\in\N$.}
\end{equation}
\end{defi}

Assume $n$ is the smallest positive integer for which there exist constants
$c_1,\dots,c_n$ satisfying \eqref{the relation}. Every recurrence
sequence as above has a \emph{characteristic polynomial}
$$X^n - \sum_{i=1}^n c_i X^{n-i},$$
whose roots are called the \emph{characteristic roots} of $\{u_k\}_{k\in\N}$. Note that because $c_n\ne 0$, each characteristic root is nonzero. We let $\{\zeta_i\}_{i=1}^m$ be the distinct characteristic roots of $\{u_k\}_{k\in\N}$. Then there exist (single variable) polynomials $\{f_i\}_{i=1}^m$ such that for each $k\in \N$, we have
\begin{equation}
\label{recurrence formula}
u_k = \sum_{i=1}^m f_i(k) \zeta_i^k.
\end{equation}
If $K$ is an algebraically closed field, and $u_0,\dots,u_{n-1};c_1,\dots,c_n\in K$, then $\zeta_i\in K$ and $f_i\in K[X]$ for each $i$. Moreover, for \emph{any} given $\zeta_1,\dots,\zeta_m\in K$, and \emph{any} given polynomials $f_1,\dots,f_m\in K[X]$, the sequence $\{u_k\}_{k\in\N}\subset K$ defined by \eqref{recurrence formula} satisfies a linear recurrence relation.

The following result is the well-known Skolem-Mahler-Lech theorem (see \cite{ESS} for a more recent quantitative version).

\begin{prop}
\label{ever 3}
Let $m\in\N$, let $\zeta_1,\dots,\zeta_m\in\C^*$, and let $f_1,\dots,f_m\in \C[X]$. Then for every $C\in\C$, the set of all $k\in\N$ such that 
\begin{equation}
\label{second equation}
\sum_{i=1}^m f_i(k) \zeta_i^k = C
\end{equation} 
is either empty or a finite union of arithmetic progressions.
\end{prop}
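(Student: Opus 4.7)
The plan is to reproduce the classical $p$-adic analytic proof of the Skolem-Mahler-Lech theorem. The idea is to split $\N$ into finitely many arithmetic progressions modulo a carefully chosen $N$ and to show that, on each progression, the function $k \mapsto \sum_i f_i(k)\zeta_i^k - C$ is the restriction of a $p$-adic analytic function of a single $p$-adic variable; Strassman's theorem then forces this function either to vanish identically on $\Z_p$ (in which case the whole progression lies in the zero set) or to have at most finitely many zeros (in which case that progression contributes only finitely many solutions).

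First I would reduce to a $p$-adic setting. All coefficients of the $f_i$, all $\zeta_i^{\pm 1}$, and $C$ generate a finitely generated subring $R$ of $\C$ over $\Z$. Choosing a maximal ideal of $R$ gives, by the Nullstellensatz over $\Z$, a quotient that is a finite field $\F_q$ of some characteristic $p$; lifting along the projection $\cO \twoheadrightarrow \F_q$ for $\cO$ the ring of integers of a suitable finite extension $K$ of $\Q_p$ yields an embedding $R \hookrightarrow \cO$. By construction each $\zeta_i$ becomes a unit in $\cO$.

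Next I would choose $N$. Because $(\cO/\mathfrak{p}^s)^*$ is finite for each $s$, one can pick $N\ge 1$ so that $\zeta_i^N \equiv 1 \pmod{\mathfrak{p}^s}$ for all $i$, with $s$ large enough that the $p$-adic logarithm converges on $\zeta_i^N$ and its image lies in the domain of convergence of the $p$-adic exponential. Then for each residue class $r \in \{0,1,\dots,N-1\}$, writing $k = Nj + r$,
\[
g_r(j) \;:=\; \sum_{i=1}^m f_i(Nj+r)\,\zeta_i^{r}\,\exp\!\bigl(j\log\zeta_i^{N}\bigr) - C
\]
extends to a $p$-adic analytic function $g_r : \Z_p \to K$, since each exponential factor is a convergent power series in $j$ and each $f_i(Nj+r)$ is a polynomial in $j$. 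For $j\in\N$, $g_r(j)$ coincides with $\sum_i f_i(Nj+r)\zeta_i^{Nj+r} - C$.

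Finally, I would apply Strassman's theorem to each of the finitely many $g_r$. Either $g_r \equiv 0$ on $\Z_p$, and the entire arithmetic progression $\{Nj + r : j\in \N\}$ lies in the zero set; or $g_r$ has only finitely many zeros in $\Z_p$, and hence contributes only finitely many $k$ (which one may absorb as constant arithmetic progressions with $N=0$). Taking the union over $r$ produces the desired finite union of arithmetic progressions. The main obstacle is the first step: setting up an embedding of the finitely generated coefficient ring into a $p$-adic field in which the $\zeta_i$ are units and, after a suitable power, lie inside the disk of convergence of the $p$-adic logarithm; once this is arranged, the Strassman step is routine.
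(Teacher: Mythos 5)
The paper does not prove this proposition; it states it as the classical Skolem--Mahler--Lech theorem (citing Skolem, Mahler, and Lech in the introduction, and \cite{ESS} for a quantitative version) and uses it as a black box. So there is no in-text proof to compare against. Your proposal reconstructs the standard $p$-adic argument, and the overall structure is correct: embed the coefficients into a $p$-adic ring in which the $\zeta_i$ become units, choose $N$ so that each $\zeta_i^N$ lies close enough to $1$ for the $p$-adic logarithm and exponential to be well behaved, interpolate each residue class modulo $N$ by the analytic function $g_r$, and invoke Strassman's theorem. This is in fact the proof the authors allude to when they remark that their own methods ``are not $p$-adic (as in the case of the classical Skolem--Mahler--Lech theorem).''

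The one step that is not correct as written is the construction of the embedding $R \hookrightarrow \cO$. You rightly flag it as the main obstacle, but the mechanism you describe does not work: having a surjection $R \twoheadrightarrow \bF_q$ (from a maximal ideal, via the Nullstellensatz over $\Z$) together with a surjection $\cO \twoheadrightarrow \bF_q$ does not, by ``lifting,'' produce an injection $R \hookrightarrow \cO$. In general $R$ is a domain whose fraction field has positive transcendence degree over $\Q$, so there is no Hensel-type formal lift; even the natural injection of $R$ into the $\mathfrak{m}$-adic completion lands in a power series ring over a complete DVR, not in $\cO$ itself, and projecting further to $\cO$ typically kills elements of $R$. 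The tool you actually need is the Lech/Cassels embedding theorem: a finitely generated integral domain over $\Z$ embeds into the ring of integers $\cO$ of a finite extension of $\Q_p$ for infinitely many primes $p$, and $p$ can be chosen so that any prescribed finite set of nonzero elements of $R$ (here the $\zeta_i$) map to units. Its proof chooses a transcendence basis of $\mathrm{Frac}(R)$ over $\Q$, specializes it to algebraically independent elements of $\Z_p$ (possible because $\Z_p$ is uncountable while $\Qbar$ is countable), then embeds the remaining finite algebraic extension into a finite extension of $\Q_p$, avoiding finitely many bad primes to keep the $\zeta_i$ integral units. With that lemma in place of your embedding step, the remainder of your argument goes through.
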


\section{Semiabelian varieties with an endomorphism}
\label{semi-sub}

We are ready to prove Theorem~\ref{semiabelian}.  We begin with some
notation and some reductions.
  
  Since every endomorphism of a semiabelian variety is integral over
  $\bZ$, we may let $X^g - \sum_{i=1}^g e_i X^{g-i}$ be the minimal
  polynomial of $\phi$ over $\Z$. Then, for each $k\ge 0$, we
  have
\begin{equation}
\label{recurrence 2}
\phi^{k+g}(P) = \sum_{i=1}^g e_i \phi^{k+g-i}(P).
\end{equation}
If $e_g=0$, then we let $g_1$ be the largest index $i$ for which
$e_i\ne 0$. Because $\OO:=\OO_{\phi}(P)$ and $\OO_{\phi}(\phi^{g-g_1}(P))$ differ by
finitely many points, it suffices to prove Theorem~\ref{semiabelian}
for $\phi^{g-g_1}(P)$ instead of $P$. Thus, by replacing $P$ with
$\phi^{g-g_1}(P)$, we may replace $g$ by $g_1$ in \eqref{recurrence
  2}.  Hence, without loss of generality, we assume that the constant
$e_g$ in \eqref{recurrence 2} is nonzero.

For each $j\in\{0,\dots,g-1\}$ we define the sequence $\{z_{k,j}\}_{k\ge 0}$ as follows
\begin{equation}
\label{equation z 1}
z_{k,j} = 0 \text{ if }0\le k\le g-1\text{ and }k\ne j;
\end{equation}
\begin{equation}
\label{equation z 2}
z_{j,j} = 1\text{, and}
\end{equation}
\begin{equation}
\label{equation z 3}
z_{k,j} = \sum_{i=1}^{g} c_i z_{k-i,j}\text{ for all $k\ge g$.}
\end{equation}
Using \eqref{equation z 1} and \eqref{equation z 2} we obtain that
\begin{equation}
\label{equation z 4}
\phi^k(P) = \sum_{j=0}^{g-1} z_{k,j} \phi^j(P)\text{, for every $0\le k\le g-1$.}
\end{equation}
Using \eqref{recurrence 2}, \eqref{equation z 3} and \eqref{equation z 4}, an easy induction on $k$ shows that
\begin{equation}
\label{equation z 5}
\phi^k(P) = \sum_{j=0}^{g-1} z_{k,j}\phi^j(P)\text{, for every $k\ge 0$.}
\end{equation}
For each $j$, the sequence $\{z_{k,j}\}_{k\in\N}$ is a linear recurrence sequence; they all have the same characteristic polynomial. Hence there exists $m\in\N$, there exist $\{\gamma_i\}_{1\le i\le m}\subset\Qbar$, and there exist $\{f_{j,i}\}_{\substack{0\le j\le g-1 \\ 1\le i\le m}} \subset \Qbar[X]$ such that for every $j\in\{0,\dots,g-1\}$, and for every $k\in\N$, we have
\begin{equation}
\label{equation z gamma}
z_{k,j} = \sum_{i=1}^m f_{j,i}(k) \gamma_{i}^k.
\end{equation}
The numbers $\gamma_i$ are the characteristic roots of the recurrence sequences $\{z_{k,j}\}_{k\in\N}$; they are the same for each $j$.

Since $\phi$ is integral over $\bZ$, the module $\Z[\phi]$ is a finite
extension of $\Z$. Therefore, every finitely generated
$\Z[\phi]$-module is also a finitely generated $\Z$-module.  Let
$\Gamma$ be the cyclic $\Z[\phi]$-module generated by $P$. Then
$\Gamma$ is a finitely generated $\Z$-module, and so, by Vojta's proof
(\cite{V1}) of the Mordell-Lang conjecture for semiabelian varieties,
$V(\C)\cap\Gamma$ is a finite union of cosets $\{b_i+H_i\}_{i=1}^s$ of
subgroups $H_i\subset \Gamma$. Hence
\begin{equation}
\label{first intersection}
V(\C)\cap\OO = \bigcup_{i=1}^s \left( (b_i+H_i)\cap\OO\right).
\end{equation}
Thus, it suffices to show that for each coset 
\begin{equation}\label{b}
(b+H)\subset \Gamma
\end{equation}
appearing in \eqref{first intersection}, the intersection
$(b+H)\cap\OO$ is either empty or a finite union of orbits of the form
$\OO_{\phi^N}(\phi^{\ell}(P))$, where $N,\ell\in\N$.  Let us now fix
some notation.  

\begin{itemize}

\item We write 
$$ \Gamma = \Gamma_{\tor} \bigoplus \Gamma_1,$$
where $\Gamma_{\tor}$ is a finite torsion group and $\Gamma_1$ is a
free group of finite rank. 
\item $\{R_1,\dots,R_n\}$ is a
$\Z$-basis for $\Gamma_1$.
\item  For each $j\in\{0,\dots,g-1\}$, we let
$T^{(j)}\in\Gamma_{\tor}$ and $Q^{(j)}\in\Gamma_1$ such that
$\phi^j(P)=T^{(j)} + Q^{(j)}$. 
\item For each such $j$, we let
$\{a_{j,i}\}_{1\le i\le n}\subset\Z$ such that
\begin{equation}
\label{E:coefficient}
Q^{(j)} = \sum_{i=1}^n a_{j,i} R_i.
\end{equation}
Then, for each $k\in\N$, we have
\begin{equation}
\label{general form of the sum}
\phi^k(P) = \left(\sum_{j=0}^{g-1} z_{k,j} T^{(j)}\right) + \left(\sum_{j=0}^{g-1} z_{k,j} \sum_{i=1}^n a_{j,i} R_i\right).
\end{equation}

\item For $b$ in \eqref{b}, we write $b=b^{(0)}+b^{(1)}$, where
  $b^{(0)}\in\Gamma_{\tor}$, and $b^{(1)}\in \Gamma_1$.

\item We write $H_1:=H\cap\Gamma_1$, where $H$ is as in \eqref{b}. 
  
\item For each $h\in \Gamma_{\tor}$, if $(h+\Gamma_1)\cap H$ is not
  empty, we fix $(h+U_h)\in H$ for some $U_h\in \Gamma_1$.
  
\item For each $h\in \Gamma_{\tor}$, we let
  $$\OO^{(h)}:=\{\phi^k(P)\in\OO\text{ : }\left(-h
    -b^{(0)}+\phi^k(P)\right)\in\Gamma_1\}.$$
\end{itemize}

With the above notation, we have
\begin{equation}
\label{big intersection}
\begin{split}
  \OO\cap (b+H)
  & = \bigcup_{h\in \Gamma_{\tor}} \left( \OO^{(h)}\cap
    \left((h+b^{(0)})+(b^{(1)} + U_h+H_1)\right) \right)\\
  = \bigcup_{h\in \Gamma_{\tor}} &  \left( (h+b^{(0)})+\left(\left(
        -h-b^{(0)}+\OO^{(h)}\right)\cap (b^{(1)}+U_h+H_1)\right)
  \right),
\end{split}
\end{equation}
where $-x+Y:=\{-x+y\text{ : }y\in Y\}$ for every point $x$, and every subset $Y$ of $G$. For each $h\in\Gamma_{\tor}$ such that $(h+\Gamma_1)\cap H=\emptyset$ the above intersection is empty (and there is no $U_h$).
 
Using \eqref{general form of the sum} and \eqref{big intersection}, we conclude that $\OO\cap(b+H)$ is a finite union over $h\in \Gamma_{\tor}$ of the points $\phi^k(P)$ corresponding to $k\in\N$ such that
\begin{equation}
\label{condition}
\sum_{j=0}^{g-1} z_{k,j} T^{(j)} = h+b^{(0)}\text{ and
}\sum_{j=0}^{g-1} \big( z_{k,j} \sum_{i=1}^n a_{j,i} R_i \big) \in (b^{(1)}+U_h +H_1).
\end{equation}
\begin{lemma}
\label{torsion}
Let $h\in \Gamma_{\tor}$. The set of $k\in\N$ such that $\sum_{j=0}^{g-1} z_{k,j} T^{(j)} = h$ is either empty or a finite union of arithmetic progressions.
\end{lemma}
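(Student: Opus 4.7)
The plan is to reduce the statement to the fact that any integer linear recurrence sequence becomes eventually periodic modulo any fixed positive integer. Since $\Gamma_{\tor}$ is a finite abelian group, let $M$ be its exponent, so that $Mx=0$ for every $x\in\Gamma_{\tor}$. In particular, for each $j\in\{0,\dots,g-1\}$, the element $z_{k,j}T^{(j)}\in\Gamma_{\tor}$ depends only on the residue $z_{k,j}\pmod M$.

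First I would observe that the integers $z_{k,j}$ satisfy the linear recurrence \eqref{equation z 3} with integer coefficients $e_1,\dots,e_g$. Passing to $\Z/M\Z$, the state vector
$$ \mathbf{v}_k := (z_{k,0}\bmod M,\,z_{k+1,0}\bmod M,\,\dots,\,z_{k+g-1,0}\bmod M, \dots, z_{k+g-1,g-1}\bmod M) $$
(collecting the $g$ consecutive values of each of the $g$ sequences) lives in the finite set $(\Z/M\Z)^{g^2}$ and evolves under a fixed affine map. Hence the sequence $\{\mathbf{v}_k\}_{k\in\N}$ is eventually periodic: there exist $k_0\in\N$ and $N\ge 1$ such that $\mathbf{v}_{k+N}=\mathbf{v}_k$ for every $k\ge k_0$. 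Consequently, the sequence
$$ S_k := \sum_{j=0}^{g-1} z_{k,j}\,T^{(j)} \;\in\; \Gamma_{\tor} $$
is eventually periodic with the same period $N$ from index $k_0$ onward.

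Given $h\in\Gamma_{\tor}$, let $A:=\{k\in\N : S_k=h\}$. The finite initial segment $A\cap\{0,1,\dots,k_0-1\}$ is a finite set of singletons, and singletons are arithmetic progressions (with $N=0$ in the notation of the \textbf{notation} environment). For $k\ge k_0$, by periodicity, $k\in A$ depends only on the residue of $k$ modulo $N$; thus $A\cap\{k\ge k_0\}$ is a finite union of (possibly shifted) arithmetic progressions with common difference $N$, one for each residue class $r\in\{0,\dots,N-1\}$ such that $S_r=h$ (for some, equivalently for all, sufficiently large $k$ with $k\equiv r\pmod N$). Taking the union completes the proof.

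No step is truly an obstacle: the only substantive point is the elementary observation that an integer-valued linear recurrence sequence is eventually periodic modulo $M$, which follows from the pigeonhole principle applied to the finitely many possible state vectors in $(\Z/M\Z)^g$. Note that one could alternatively deduce this by invoking Proposition~\ref{ever 3} componentwise on each $\Z/M\Z$-valued entry, but the direct finite-state argument is cleaner.
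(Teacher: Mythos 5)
Your proof is correct and takes essentially the same approach as the paper: both choose a modulus killing $\Gamma_{\tor}$ (your $M$, the paper's $N$ with $\Gamma_{\tor}\subset\Gamma[N]$), observe that each integer linear recurrence $\{z_{k,j}\}_k$ is eventually periodic modulo that integer by the pigeonhole argument on finite state vectors, and conclude that the $\Gamma_{\tor}$-valued sequence $\sum_j z_{k,j}T^{(j)}$ is eventually periodic, so the fiber over $h$ is a finite union of arithmetic progressions. The extra bookkeeping with the $(\Z/M\Z)^{g^2}$ state vector is slightly more verbose than the paper's one-sentence appeal to preperiodicity, but the content is identical.
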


\begin{proof}[Proof of Lemma~\ref{torsion}.]
  Choose $N\in\mathbb{N}$ such that $\Gamma_{\tor}\subset \Gamma[N]$.
  Then the value of $\sum_{j=0}^{g-1} z_{k,j} T^{(j)}$ is
  completely determined by the values of the $z_{k,j}$ modulo $N$.
  Since there are finitely many $g$-tuples of integers modulo $N$, and
  each $\{z_{k,j}\}_{k\in\N}$ is a linear recurrence sequence of
  degree $g$ in $\mathbb{Z}$, it follows that each sequence
  $\{z_{k,j}\}_{k\in\N}$ eventually begins to repeat itself modulo $N$, i.e.
  each sequence is preperiodic modulo $N$.  Thus, each value taken by
  $\sum_{j=0}^{g-1}z_{k,j}T^{(j)}$ is attained for $k\in\N$ living in
  a finite union of arithmetic progressions.
\end{proof}

We will now prove a more difficult Lemma from which the proof of
Theorem~\ref{semiabelian} will follow easily.
\begin{lemma}
\label{C:non-torsion}
Let $h\in \Gamma_{\tor}$ be fixed such that $(h+\Gamma_1)\cap H$ is not empty. The set of all $k\in\N$ for which 
\begin{equation}
\label{E:nontorsion}
\sum_{j=0}^{g-1} \big( z_{k,j}  \sum_{i=1}^n a_{j,i} R_i \big) \in (b^{(1)}+U_h +H_1)
\end{equation} 
is either empty or a finite union of arithmetic progressions.
\end{lemma}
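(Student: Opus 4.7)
The plan is to encode the coset-membership condition \eqref{E:nontorsion} as a finite system of scalar conditions on the integer-valued recurrence sequences $\{z_{k,j}\}$, and then dispatch each condition using either Proposition~\ref{ever 3} or the periodicity argument from Lemma~\ref{torsion}.

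First I would use the basis $R_1,\dots,R_n$ to identify $\Gamma_1$ with $\Z^n$, so that $\sum_{j=0}^{g-1} z_{k,j}\sum_{i=1}^n a_{j,i}R_i$ corresponds to the integer vector whose $i$-th coordinate is $\sum_{j=0}^{g-1} a_{j,i}\,z_{k,j}$. The quotient $\Gamma_1/H_1$ is a finitely generated abelian group, hence isomorphic to $\Z^s\oplus\bigoplus_{t=1}^u \Z/M_t\Z$ for some $s,u\ge 0$. Consequently, for any fixed $v\in\Gamma_1$, membership in $v+H_1$ is equivalent to a finite system consisting of $s$ integer equations of the form $L_\alpha(x)=L_\alpha(v)$ together with $u$ congruences of the form $L'_\beta(x)\equiv L'_\beta(v)\pmod{M_\beta}$, where the $L_\alpha,L'_\beta\colon\Z^n\to\Z$ are $\Z$-linear forms. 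Taking $v=b^{(1)}+U_h$ and substituting $x_i=\sum_{j=0}^{g-1} a_{j,i}z_{k,j}$, each such condition becomes a $\Z$-linear relation (respectively, a $\Z$-linear congruence) among the sequences $\{z_{k,j}\}_{k\in\N}$.

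Next I would handle the two types of conditions separately. For an integer equation $\sum_{j=0}^{g-1} \tilde{a}_j z_{k,j}=\nu$, substituting \eqref{equation z gamma} gives
\[
\sum_{i=1}^m \Bigl(\sum_{j=0}^{g-1} \tilde{a}_j f_{j,i}(k)\Bigr)\gamma_i^k \;=\;\nu,
\]
which, by Proposition~\ref{ever 3}, has solution set in $\N$ either empty or a finite union of arithmetic progressions. For a congruence $\sum_{j=0}^{g-1} \tilde{a}_j z_{k,j}\equiv \nu\pmod{M}$, the left-hand side is an integer linear recurrence sequence of bounded degree, so, exactly as in the proof of Lemma~\ref{torsion}, it is eventually periodic modulo $M$; hence the set of $k$ attaining the prescribed residue is again a finite union of arithmetic progressions.

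Finally, the set of $k$ satisfying \eqref{E:nontorsion} is the intersection of the finitely many solution sets produced above, and the intersection of a finite collection of finite unions of arithmetic progressions in $\N$ is itself a finite union of arithmetic progressions (or empty). The main obstacle is the translation of the coset-membership condition into scalar conditions amenable to Proposition~\ref{ever 3}; once that reduction is in place, the rest is a routine appeal to Skolem-Mahler-Lech together with the periodicity of integer recurrences modulo a fixed integer.
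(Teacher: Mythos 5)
Your proposal is correct and follows essentially the same approach as the paper: translate membership in $b^{(1)}+U_h+H_1$ into a finite system of $\Z$-linear equations and $\Z$-linear congruences in the $z_{k,j}$, then dispatch the equations via Proposition~\ref{ever 3} and the congruences via preperiodicity of integer recurrences modulo a fixed integer, and finally intersect the resulting finite unions of arithmetic progressions. The only cosmetic difference is that you obtain the equations and congruences by invoking the structure theorem for $\Gamma_1/H_1$ (Smith normal form), whereas the paper constructs an explicit echelon-form basis for $H_1$ and eliminates parameters by hand; these produce the same system.
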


\begin{proof}[Proof of Lemma~\ref{C:non-torsion}.]
We first define three classes of subsets of $\mathbb{Z}^n$.
\begin{defi}
A $\CM$-subset of $\mathbb{Z}^n$ is a set $C(d_1,\dots,d_n,D_1,D_2)$, where $d_1,\dots,d_n,D_1,D_2\in\mathbb{Z}$ and $D_2\ne 0$, containing all solutions $(x_1,\dots,x_n)\in\mathbb{Z}^n$ of $\sum_{i=1}^n d_ix_i\equiv D_1\text{ (mod $D_2$)}$.

An $\Lab$-subset of $\mathbb{Z}^n$ is a set $L(d_1,\dots,d_n,D)$, where $d_1,\dots,d_n,D\in\mathbb{Z}$, containing all solutions $(x_1,\dots,x_n)\in\mathbb{Z}^n$ of 
$\sum_{i=1}^n d_ix_i=D$.

A $\CL$-subset of $\mathbb{Z}^n$ is either a $\CM$-subset or an $\Lab$-subset of $\mathbb{Z}^n$.
\end{defi}
The $\CM$-subsets may be thought of as satisfying congruence
relations, while the $\Lab$-subsets satisfy linear conditions.

\begin{claim}
\label{C:non-torsion-4}
There exist $\CL$-subsets $S_1,\dots,S_n$ of $\mathbb{Z}^n$ such that a point $R:=\sum_{i=1}^n c^{(i)}R_i$ lies in $U_h+b^{(1)}+ H_1$ if and only if 
$$(c^{(1)},\dots,c^{(n)})\in\bigcap_{i=1}^n S_i.$$
\end{claim}
\begin{proof}[Proof of Claim~\ref{C:non-torsion-4}.]
Because $H_1\subset \Gamma_1$ and $\Gamma_1$ is a free $\mathbb{Z}$-module with basis $\{R_1,\dots,R_n\}$, we can find (after a possible relabeling of $R_1,\dots,R_n$) a $\mathbb{Z}$-basis $Q_1,\dots,Q_{\ell}$ (with $1\le \ell\le n$) of $H_1$ of the following form:
$$Q_1=\beta_{1}^{(i_1)}R_{i_1}+\dots+\beta_{1}^{(n)}R_n;$$
$$Q_2=\beta_{2}^{(i_2)}R_{i_2}+\dots+\beta_{2}^{(n)}R_n;$$
and in general
\begin{equation}\label{Q}
Q_j = \beta_{j}^{(i_j)}R_{i_j} + \dots + \beta_{j}^{(n)}R_n 
\end{equation}
for each $j\le \ell$, where $$1\le i_1<i_2<\dots<i_{\ell}\le n$$
and
all $\beta_{j}^{(i)}\in\mathbb{Z}$. We also assume
$\beta_j^{(i_j)}\ne 0$ for every $j\in\{1,\dots,\ell\}$.

Let $b_{1,1},\dots,b_{1,n}\in\mathbb{Z}$ such that $b^{(1)}+ U_h=\sum_{j=1}^n b_{1,j}R_j$. Then $R\in (b^{(1)}+U_h+H_1)$ if and only if there exist integers $k_1,\dots,k_{\ell}$ such that 
\begin{equation}
\label{E:v1}
R=b^{(1)}+U_h+\sum_{i=1}^{\ell}k_i Q_i.
\end{equation}
Using the expressions for the $Q_i$ (in \eqref{Q}), $(b^{(1)}+U_h)$,
and $R$ in terms of the $\mathbb{Z}$-basis $\{R_1,\dots,R_n\}$ of
$\Gamma_1$, we obtain the following relations for the coefficients
$c^{(j)}$:
\begin{equation}
\label{E:a}
c^{(j)}=b_{1,j}\text{ for every $1\le j<i_1$;}
\end{equation}
\begin{equation}
\label{E:v2}
c^{(j)}=b_{1,j}+k_1\beta_1^{(j)}\text{ for every $i_1\le j<i_2$;}
\end{equation}
\begin{equation}
\label{E:v3}
c^{(j)}=b_{1,j}+k_1\beta_1^{(j)}+k_2\beta_2^{(j)}\text{ for every $i_2\le j<i_3$}
\end{equation}
and so on, until
\begin{equation}
\label{E:v4}
c^{(n)}=b_{1,n}+\sum_{i=1}^{\ell} k_i\beta_i^{(n)}.
\end{equation}

We interpret the above relations as follows: the numbers
$c^{(1)},\dots,c^{(n)}$ are the unknowns, while the numbers
$k_1,\dots,k_{\ell}$ are integer parameters, and all $b_{1,i}$ and
$\beta_j^{(i)}$ are integer constants. We will show, by eliminating
the parameters $k_i$, that the unknowns $c^{(j)}$ must satisfy $\ell$
linear congruences and $(n-\ell)$ linear equations with coefficients
involving only the $b_{1,i}$ and the $\beta^{(i)}_j$. Each such
equation will generate a $\CL$-set.

We begin by expressing equation \eqref{E:v2} for $j=i_1$ as a linear
congruence modulo $\beta_1^{(i_1)}$ and obtain
\begin{equation}
\label{E:b}
c^{(i_1)}\equiv b_{1,i_1}\big(\text{ mod $\beta_1^{(i_1)}$} \big).
\end{equation}
Equation \eqref{E:v2} also gives us
$k_1=\frac{c^{(i_1)}-b_{1,i_1}}{\beta_1^{(i_1)}}$.  Substituting this
formula for $k_1$ into \eqref{E:v2} for each $i_1<j<i_2$, we obtain
\begin{equation}
\label{E:c}
c^{(j)}=b_{1,j}+\frac{c^{(i_1)}-b_{1,i_1}}{\beta_1^{(i_1)}}\beta_1^{(j)}\text{ for every $i_1<j<i_2$.}
\end{equation}
We then express \eqref{E:v3} for $j=i_2$ as a linear congruence modulo $\beta_2^{(i_2)}$ (also using the expression for $k_1$ computed above). We obtain
\begin{equation}
\label{E:d}
c^{(i_2)}\equiv b_{1,i_2} +
\frac{c^{(i_1)}-b_{1,i_1}}{\beta_1^{(i_1)}}
\beta_1^{(i_2)}\left(\text{ mod $\beta_2^{(i_2)}$} \right).
\end{equation}
Next we solve for $k_2$ using \eqref{E:v3} for $j=i_2$ (along with our
formula above for $k_1$) and obtain
$$k_2=\frac{c^{(i_2)}-b_{1,i_2}-\frac{c^{(i_1)}-b_{1,i_1}}{\beta_1^{(i_1)}}\beta_1^{(i_2)}}{\beta_2^{(i_2)}}.$$
Then we substitute this formula for $k_2$ in \eqref{E:v3} for
$i_2<j<i_3$ and obtain
\begin{equation}
\label{E:z}
c^{(j)}=b_{1,j}+\frac{c^{(i_1)}-b_{1,i_1}}{\beta_1^{(i_1)}}\cdot\beta_1^{(j)}+\frac{c^{(i_2)}-b_{1,i_2}-\frac{c^{(i_1)}-b_{1,i_1}}{\beta_1^{(i_1)}}\beta_1^{(i_2)}}{\beta_2^{(i_2)}}\cdot\beta_2^{(j)}.
\end{equation}
Continuing onward in this manner, we express $c^{(n)}$ in terms of the
integers $b_{1,i_1},\dots,b_{1,i_{\ell}},b_{1,n}$,
$c^{(i_1)},\dots,c^{(i_{\ell})}$, and $\{\beta_j^{(i)}\}_{j,i}$.

We observe that all of the above congruences and linear equations can
be written as linear congruences or linear equations over $\mathbb{Z}$
(after clearing the denominators). For example, the congruence
equation \eqref{E:d} can be written as the following linear congruence
over $\mathbb{Z}$:
$$\beta_1^{(i_1)}\cdot c^{(i_2)}\equiv
\beta_1^{(i_1)}b_{1,i_2}+\left(c^{(i_1)}-b_{1,i_1}\right)\beta_1^{(i_2)}\left(\text{
    mod $\beta_1^{(i_1)}\cdot\beta_2^{(i_2)}$}\right).$$
Hence all the
above conditions that must be satisfied by $c^{(j)}$ for which
$$\sum_{j=1}^nc^{(j)}R_j\in (b^{(1)}+U_h+H_1)$$
are either linear
equations over $\mathbb{Z}$ (giving rise to $\Lab$-subsets) or linear congruences
over $\mathbb{Z}$ (giving rise to $\CM$-subsets). There are
precisely $\ell$ congruences (corresponding to the $\ell$ degrees of
freedom introduced by the parameters $k_i$) and $(n-\ell)$ linear
equations. This concludes the proof of Claim~\ref{C:non-torsion-4}.
\end{proof}

We will now show that for each $S_i$ that appears in
Claim~\ref{C:non-torsion-4}, there exists at most finitely many
arithmetic progressions $W^{(i)}_j\subset\mathbb{N}$ such that
$k\in\bigcup_j W^{(i)}_j$ if and only if $(c^{(1)},\dots,c^{(n)})\in
S_i$, where
$$\sum_{i=1}^n c^{(i)}R_i := \sum_{j=0}^{g-1} \big( z_{k,j}
\sum_{i=1}^n a_{j,i} R_i \big) .$$
This will show that there exists at
most a finite number of arithmetic progressions
$$\widetilde{W}:=\bigcap_{i=1}^n\left(\bigcup_j
  W^{(i)}_j\right)\subset\mathbb{N}$$
such that $k\in\widetilde{W}$ if
and only if
\begin{equation}
\label{E:repeat non-torsion 2}
\sum_{j=0}^{g-1} \big( z_{k,j}  \sum_{i=1}^n a_{j,i} \big) R_i \in\left(b^{(1)}+ U_h+ H_1\right).
\end{equation}

\begin{claim}
\label{C:non-torsion-2}
Let $C:=C(d_1,\dots,d_n,D_1,D_2)$ be a $\CM$-subset of $\mathbb{Z}^n$. There exists at most a finite number of arithmetic progressions $W_j\subset\mathbb{N}$ such that $k\in\bigcup_j W_j$ if and only if $(c^{(1)},\dots,c^{(n)})\in C$, where
\begin{equation}
\label{E:coefficient 20}
\sum_{i=1}^n c^{(i)} R_i := \sum_{j=0}^{g-1} \big( z_{k,j} \sum_{i=1}^n a_{j,i} R_i \big)
.
\end{equation}
\end{claim}

\begin{proof}[Proof of Claim~\ref{C:non-torsion-2}.]
Using \eqref{E:coefficient 20}, we conclude that for every $1\le i\le
n$, we have
\begin{equation}
\label{E:C-coefficient}
c^{(i)}=\sum_{j=0}^{g-1} a_{j,i}z_{k,j}.
\end{equation}
Hence, the congruence equation $\sum_{i=1}^n d_ic^{(i)}\equiv D_1\text{ (mod $D_2$)}$ yields the congruence 
\begin{equation}
\label{E:congruence}
\sum_{j=0}^{g-1} h_{j}z_{k,j}\equiv D_1\text{ (mod $D_2$)}
\end{equation}
for integers $h_{j}:=\sum_{i=1}^n d_i a_{j,i}$, for each ${0\le
  j\le g-1}$ (we recall that all $a_{j,i}\in\mathbb{Z}$).  As noted in
the proof of Lemma \ref{torsion}, recursively defined sequences over
$\Z$, such as $\{z_{k,j}\}_{k\in\N}$, are preperiodic modulo any
nonzero integer (hence, they are preperiodic modulo $D_2$). Therefore
the set of all solutions $k\in\mathbb{N}$ to \eqref{E:congruence} is at most a
finite union $\bigcup_j W_j$ of arithmetic progressions in
$\mathbb{N}$.
\end{proof}

\begin{claim}
\label{C:non-torsion-3}
Let $L:=L(d_1,\dots,d_n,D)$ be an $\Lab$-subset of $\mathbb{Z}^n$.
There exist at most finitely many arithmetic progressions
$W_j\subset\mathbb{N}$ such that $k\in\bigcup_j W_j$ if and only if
$(c^{(1)},\dots,c^{(n)})\in L$, where
\begin{equation}
\label{E:coefficient 21}
\sum_{i=1}^n c^{(i)} R_i :=
\sum_{j=0}^{g-1} \big( z_{k,j} \sum_{i=1}^n a_{j,i} R_i \big) .
\end{equation}
\end{claim}

\begin{proof}[Proof of Claim~\ref{C:non-torsion-3}.]
  Using \eqref{E:coefficient 21} and \eqref{equation z gamma}, we
  conclude that for every $1\le i\le n$, we have
\begin{equation}
\label{E:tipical}
c^{(i)}=\sum_{j=0}^{g-1} a_{j,i}\sum_{\ell=1}^{m} f_{j,\ell}(k)\gamma_{\ell}^{k}.
\end{equation}
The linear equation $\sum_{i=1}^n d_i c^{(i)}=D$ yields the following
equation (after collecting the coefficients of $\gamma_{\ell}^{k}$ for
each $1\le \ell\le m$):
\begin{equation}
\label{E:M-L}
\sum_{\ell=1}^m  f_{\ell}(k) \gamma_{\ell}^{k}=D,
\end{equation}
where $f_{\ell}:=\sum_{j=0}^{g-1} \sum_{i=1}^n d_i a_{j,i}\cdot
f_{j,\ell} \in\Qbar[X]$ for each $\ell\in\{1,\dots,m\}$. Using
Proposition~\ref{ever 3}, the set of all $k\in\N$ satisfying
\eqref{E:M-L} is at most a finite union of arithmetic progressions, as
desired.
\end{proof}

Since the intersection of two arithmetic progressions in $\mathbb{N}$
is another arithmetic progression (or the empty set), Claims~\ref{C:non-torsion-2} and
\ref{C:non-torsion-3} finish the proof of Lemma~\ref{C:non-torsion}.
\end{proof}

We are now ready to complete the proof of Theorem~\ref{semiabelian}.
\begin{proof}[Proof of Theorem~\ref{semiabelian}.]
  It follows from Lemmas~\ref{torsion} and \ref{C:non-torsion}, that
  for each fixed $h\in \Gamma_{\tor}$, there is at most a finite union $W_h$ of
  arithmetic progressions in $\mathbb{N}$ such that $k\in\mathbb{N}$
  satisfies the equations
\begin{equation}
\label{E:repeat torsion}
\sum_{j=0}^{g-1}z_{k,j}T^{(j)} = h+b^{(0)} \quad \text{and}
\end{equation}
\begin{equation}
\label{E:repeat non-torsion}
\sum_{j=0}^{g-1} \big(z_{k,j} \sum_{i=1}^n a_{j,i} R_i \big)  \in
\left(b^{(1)} + U_h + H_1\right)
\end{equation}
if and only if $k\in W_h$. Using \eqref{big intersection},
\eqref{condition}, and that $\Gamma_{\tor}$ is finite, we conclude that
the set of all $k\in\N$ for which $\phi^k(P) \in (b+H)$ is either
empty or a finite union of arithmetic progressions.
\end{proof}

\def\cprime{$'$} \def\cprime{$'$} \def\cprime{$'$} \def\cprime{$'$}
\providecommand{\bysame}{\leavevmode\hbox to3em{\hrulefill}\thinspace}
\providecommand{\MR}{\relax\ifhmode\unskip\space\fi MR }
\providecommand{\MRhref}[2]{%
  \href{http://www.ams.org/mathscinet-getitem?mr=#1}{#2}
}
\providecommand{\href}[2]{#2}


\end{document}